\theoremstyle{plain}
 \newtheorem{thm}{Theorem}[section]
 \newtheorem{prop}{Proposition}[section]
 \newtheorem{lem}{Lemma}[section]
 \newtheorem{cor}{Corollary}[section]
\theoremstyle{definition}
 \newtheorem{exm}{Example}[section]
 \newtheorem{dfn}{Definition}[section]
\theoremstyle{remark}
 \numberwithin{equation}{section}
\renewcommand{\leq}{\leqslant}
\renewcommand{\geq}{\geqslant}
\title[Composition operators]
{Composition operators acting on weighted Hilbert spaces of analytic functions}
\subjclass[2010]{Primary 47B33; Secondary 30H30, 46E40.}
\keywords{Composition operators, Weighted analytic space, Hilbert-Schmidt, Schatten-class, Fredholm.}
\author[Hassanlou
]{\textsc{Mostafa Hassanlou$^*$
 } \\
\\
\textit{\footnotesize Depatment of Pure Mathematics, Faculty of Mathematical Sciences \\
University of Tabriz, Tabriz, Iran.\\
m$_{-}$hasanloo@tabrizu.ac.ir 
}}
\thanks{$^*$Corresponding author}
\begin{document}

\vspace{18mm}
\setcounter{page}{1}
\thispagestyle{empty}

\begin{abstract}
In this paper, we consider composition operators on weighted Hilbert spaces of analytic functions and  observe that a formula for the  essential
norm, give a Hilbert-Schmidt characterization and characterize the membership in Schatten-class for these operators. Also, 
closed range composition operators  are investigated.
\end{abstract}

\maketitle

\section{\bf Introduction}
\indent
Let $\mathbb{D}$ denotes the open unit disk $\{ z \in \mathbb{C}: |z|< 1\}$ and $\varphi$ be an analytic self map of $\mathbb{D}$. The
composition operator $C_{\varphi}$ induced by $\varphi$ is defined  $C_{\varphi} f = f \circ \varphi$, for any $f \in H(\mathbb{D})$,
the space of all analytic functions on $\mathbb{D}$. This operator can be generalized to the weighted composition operator $u C_{\varphi}$,
$u C_{\varphi} f(z) = u(z) f(\varphi(z))$, $u \in H(\mathbb{D})$.
We consider a \textit{weight} as a  positive integrable function
$\omega \in C^2 [0,1)$ which is radial, $\omega(z) = \omega(|z|)$. The weighted Hilbert space of analytic functions $\mathcal{H}_{\omega}$ consists of
all analytic functions on $\mathbb{D}$ such that
$$ ||f'||_{\omega}^2 = \int_{\mathbb{D}} |f'(z)|^2 \omega(z) \ dA(z) < \infty, $$
equipped with the norm $||f||_{\mathcal{H}_{\omega}}^2 = |f(0)|^2 + ||f'||_{\omega}^2 $.
Here $dA$ is the normalized area measure on $\mathbb{D}$.
Also the weighted Bergman spaces defined by
$$ \mathcal{A}_{\omega}^2 = \left \{ f \in H(\mathbb{D}): ||f||_{{\omega}}^2 = \int_{\mathbb{D}} |f(z)|^2 \omega(z) \ dA(z) < \infty \right \}.  $$
If $f(z) = \sum_{n=0}^{\infty} a_n z^n$, then $f \in \mathcal{H}_{\omega}$ if and only if
$$ ||f||_{\mathcal{H}_{\omega}}^2 = \sum_{n=0}^{\infty} |a_n|^2 \omega_n < \infty, $$
where $\omega_0 =1$ and for $n \geq 1$
$$ \omega_n = 2 n^2 \int_{0}^{1} r^{2n-1} \omega(r) dr, $$
and $f \in \mathcal{A}_{\omega}$ if and only if
$$ ||f||_{\mathcal{A}_{\omega}}^2 = \sum_{n=0}^{\infty} |a_n|^2 p_n < \infty, $$
where
$$ p_n = 2\int_{0}^{1} r^{2n+1} \omega(r) dr, \ \ \ n\geq 0. $$
By letting $\omega_{\alpha} (r) = (1-r^2)^{\alpha}$ (standard weight), $\alpha > -1$, $\mathcal{H}_{\omega_{\alpha}} = \mathcal{H}_{\alpha}$. If
$0 \leq \alpha <1$, then $\mathcal{H}_{\alpha} = \mathcal{D}_{\alpha}$, the weighted Dirichlet space, and  $\mathcal{H}_{1} = H^2$, the Hardy space. \\
\indent
There are several papers that studied composition operators on various spaces of analytic functions. The best monographs for these operators are
\cite{cown, shap}. In \cite{kellay1}, Kellay and Lef\`{e}vre studied composition operators on weighted Hilbert space of analytic functions by using
generalized Nevanlinna counting function. They characterized boundedness and compactness of these operators. Pau and P\'{e}rez \cite{pau} studied
boundedness, essential norm,  Schatten-class and closed range properties of these operators acting on  weighted Dirichlet spaces. \\
\indent
Our aim in this paper is to generalize the results of \cite{pau} to a large class of spaces.
Throughout the remainder of this paper, $c$ will denote a positive constant, the exact
value of which will vary from one appearance to the next.
\section{\bf Preliminaries}
In this section we give some notations and lemmas will be used in our work.
\begin{dfn} \cite{kellay1}
We assume that $\omega$ is a weight function, with the following properties \\
\indent ($W_1$): $\omega$ is non-increasing, \\
\indent ($W_2$): $\omega(r)(1-r)^{-(1+\delta)}$ is non-decreasing for some $\delta >0$, \\
\indent ($W_3$): $\lim_{r \rightarrow 1^{-}} \omega (r) =0$, \\
\indent ($W_4$): One of the two properties of convexity is fulfilled
$$\left \{
  \begin{array}{ll}
    (W_4^{(I)}):  & \omega  \ is \ convex \  and \  \lim_{r \rightarrow 1} \omega'(r) =0,  \\
  or &   \\
  (W_4^{(II)}): & \omega \ is \  concave.
  \end{array}
\right.$$
Such a weight $\omega$ is called admissible.
\end{dfn}
If $\omega$ satisfies conditions ($W_1$)-($W_3$) and ($W_4^{(I)}$) (resp. ($W_4^{(II)}$)), we shall say that $\omega$ is
(I)-admissible (resp. (II)-admissible). Also we use weights satisfy (L1) condition (due to Lusky \cite{lus}):
 $$(L1) \ \  \inf_{k} \frac{\omega(1-2^{-k-1})}{\omega(1-2^{-k})} >0. $$
This is equivalent to  this  condition (see\cite{lin}): \\
\indent There are $0 < r < 1$ and $0 < c < \infty$ with $\frac{\omega(z)}{\omega(a)} \leq c$ for every $a, z \in \Delta (a,r)$,
where $\Delta(a,r)= \{ z \in \mathbb{D}: |\sigma_a (z)| < r \}$ and
$\sigma_a (z) = \frac{a-z}{1- \overline{a}z}$ is the Mobius  transformation on $\mathbb{D}$. \\
All characterizations in this paper are needed to the generalized counting Nevanlinna function. Let
$\varphi$ be an analytic self map of $\mathbb{D}$ ($\varphi(\mathbb{D}) \subset \mathbb{D}$). The generalized counting Nevanlinna function
associated to a weight $\omega$ defined as follows
$$ N_{\varphi, \omega} (z) = \sum_{a: \varphi(a)=z} \omega (a), \ \ \  z \in \mathbb{D} \backslash \{\varphi(0)\}. $$
By using  the change of variables formula we have: If $f$ be a non-negative function on $\mathbb{D}$, then
\begin{equation} \label{r1}
\int_{\mathbb{D}} f(\varphi(z)) |\varphi'(z)|^2 \omega (z) dA(z) = \int_{\mathbb{D}} f(z) N_{\varphi, \omega} (z) dA(z).
\end{equation}
Also the generalized counting Nevanlinna function has the sub-mean value property (Lemmas 2.2 and 2.3 \cite{kellay1}). Let $\omega$ be an
admissible weight. Then for every $r>0$ and $z \in \mathbb{D}$ such that $D(z,r) \subset \mathbb{D} \backslash D(0, 1/2)$
\begin{equation} \label{r2}
 N_{\varphi, \omega} (z) \leq \frac{2}{r^2} \int_{|\zeta -z|<r} N_{\varphi, \omega} (\zeta) dA(\zeta).
\end{equation}
\begin{lem} \cite{kellay1}  \label{l2}
If  $\omega$ is a weight satisfying ($W_1$) and ($W_2$), then there exists $c>0$ such that
$$ \frac{1}{c} \omega (z) \leq \omega (\sigma_{\varphi(0)} (z)) \leq c \omega(z), \ \ \ z \in \mathbb{D}. $$
\end{lem}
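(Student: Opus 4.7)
The plan is to reduce the comparison of weights to a comparison of the boundary distances $1-|\sigma_{\varphi(0)}(z)|$ and $1-|z|$, and then to combine the monotonicity hypotheses $(W_1)$ and $(W_2)$ to convert this into a two-sided bound for $\omega$. Since $\omega$ is radial, $\omega(\sigma_{\varphi(0)}(z))$ means $\omega(|\sigma_{\varphi(0)}(z)|)$, so throughout the proof only moduli matter.

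First I would set $a=\varphi(0)$ and invoke the standard Möbius identity
$$ 1-|\sigma_a(z)|^2 = \frac{(1-|a|^2)(1-|z|^2)}{|1-\overline{a}z|^2}. $$
Using $(1-|a|)^2 \le |1-\overline{a}z|^2 \le (1+|a|)^2$ together with $1 \le 1+|\sigma_a(z)| \le 2$ and $1\le 1+|z|\le 2$, this gives constants $c_1,c_2>0$ depending only on $|a|$ with
$$ c_1 (1-|z|) \;\le\; 1-|\sigma_a(z)| \;\le\; c_2 (1-|z|). $$
Since $|a|=|\varphi(0)|$ is fixed, $c_1,c_2$ are absolute for our purposes.

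Next I would split according to whether $|\sigma_a(z)|\ge |z|$ or $|\sigma_a(z)|\le|z|$. Assume first that $|\sigma_a(z)|\ge|z|$. Then $(W_1)$ directly yields $\omega(|\sigma_a(z)|)\le \omega(|z|)$. For the reverse inequality, $(W_2)$ applied between the points $|z|$ and $|\sigma_a(z)|$ gives
$$ \frac{\omega(|\sigma_a(z)|)}{(1-|\sigma_a(z)|)^{1+\delta}} \;\ge\; \frac{\omega(|z|)}{(1-|z|)^{1+\delta}}, $$
which together with the lower bound $1-|\sigma_a(z)|\ge c_1(1-|z|)$ yields $\omega(|\sigma_a(z)|)\ge c_1^{1+\delta}\,\omega(|z|)$. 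The case $|\sigma_a(z)|\le |z|$ is symmetric: $(W_1)$ gives $\omega(|\sigma_a(z)|)\ge \omega(|z|)$, and $(W_2)$ together with $1-|\sigma_a(z)|\le c_2(1-|z|)$ yields $\omega(|\sigma_a(z)|)\le c_2^{1+\delta}\,\omega(|z|)$. Taking $c=\max\{c_1^{-(1+\delta)},c_2^{1+\delta}\}$ completes the proof.

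The argument is essentially routine; the only mild subtlety is that neither $(W_1)$ nor $(W_2)$ alone gives a two-sided bound, so one must use them in tandem according to the ordering of $|z|$ and $|\sigma_a(z)|$. No further properties of $\omega$ (in particular, neither $(W_3)$ nor $(W_4)$) enter the proof.
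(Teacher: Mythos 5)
Your proof is correct and complete: the M\"obius identity gives $1-|\sigma_a(z)|\asymp 1-|z|$ with constants depending only on the fixed quantity $|a|=|\varphi(0)|$, and your case split on the ordering of $|z|$ and $|\sigma_a(z)|$ correctly pairs $(W_1)$ with $(W_2)$ so that each condition supplies the inequality the other cannot. The paper gives no proof of this lemma, only the citation to \cite{kellay1}, and your argument is essentially the standard one found there, so there is nothing to flag.
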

\begin{lem} \cite{kellay1} \label{l1}
Let $\omega$ be a weight satisfying ($W_1$) and ($W_2$). Let $a \in \mathbb{D}$ and
$$ f_{a} (z) = \frac{1}{\sqrt{\omega(a)}} \frac{(1-|a|^2)^{1+\delta}}{(1-\overline{a} z)^{1+\delta}}. $$
Then $||f_{a}||_{\mathcal{H}_{\omega}} \asymp 1$.
\end{lem}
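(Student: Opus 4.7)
The plan is to reduce $\|f_a\|_{\mathcal H_\omega}^2 = |f_a(0)|^2 + \|f_a'\|_\omega^2$ to a Forelli--Rudin-type integral estimate, controlled by the structural bounds on $\omega$ extracted from $(W_1)$ and $(W_2)$. A direct differentiation gives $f_a'(z) = (1+\delta)\overline a (1-|a|^2)^{1+\delta}/[\sqrt{\omega(a)}(1-\overline a z)^{2+\delta}]$, so
$$\|f_a'\|_\omega^2 = \frac{(1+\delta)^2|a|^2(1-|a|^2)^{2(1+\delta)}}{\omega(a)}\,I(a),\qquad I(a):=\int_{\mathbb D}\frac{\omega(z)}{|1-\overline a z|^{4+2\delta}}\,dA(z).$$
The heart of the argument is to prove $I(a)\asymp \omega(a)/(1-|a|^2)^{2+2\delta}$, which would immediately give $\|f_a'\|_\omega^2\asymp|a|^2$.

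For the upper bound on $I(a)$, I would split $\mathbb D$ according to $|z|\le|a|$ or $|z|\ge|a|$. From $(W_1)$, on $\{|z|\ge|a|\}$ one has $\omega(z)\le\omega(a)$, and the standard estimate $\int_{\mathbb D}dA(z)/|1-\overline a z|^{4+2\delta}\asymp 1/(1-|a|^2)^{2+2\delta}$ closes this piece. On $\{|z|\le|a|\}$ the monotonicity inequality $(W_2)$ upgrades to $\omega(z)\le \omega(a)\bigl((1-|z|)/(1-|a|)\bigr)^{1+\delta}$, which feeds into the Forelli--Rudin estimate $\int_{\mathbb D}(1-|z|^2)^{1+\delta}\,dA(z)/|1-\overline a z|^{4+2\delta}\asymp 1/(1-|a|^2)^{1+\delta}$; the two factors $(1-|a|)^{-(1+\delta)}$ combine to the required $(1-|a|)^{-(2+2\delta)}$.

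For the lower bound I restrict integration to the Euclidean disk $\{|z-a|\le\epsilon(1-|a|)\}$ for a small fixed $\epsilon$. There, $|1-\overline a z|\lesssim (1-|a|)$, the area is $\asymp (1-|a|)^2$, and $(1-|z|)\asymp(1-|a|)$; inserting the last comparison into the bounds from $(W_1)$ and $(W_2)$ described above shows $\omega(z)\asymp\omega(a)$ throughout the disk. This yields $I(a)\gtrsim\omega(a)/(1-|a|)^{2+2\delta}$, completing the two-sided estimate.

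Putting everything together, for $|a|\ge\tfrac12$ one already has $\|f_a'\|_\omega^2\asymp|a|^2\asymp 1$; the $|f_a(0)|^2=(1-|a|^2)^{2(1+\delta)}/\omega(a)$ term is uniformly bounded because $(W_2)$ applied between $0$ and $|a|$ gives $\omega(a)\ge\omega(0)(1-|a|)^{1+\delta}$. For $|a|\le\tfrac12$, both $(1-|a|^2)^{2(1+\delta)}$ and $\omega(a)$ are bounded and bounded away from $0$ by continuity and positivity of $\omega$, so $|f_a(0)|^2$ alone is $\asymp 1$. The main obstacle is the upper bound on the $\{|z|\le|a|\}$ piece of $I(a)$: the naive bound $\omega(z)\le\omega(a)$ fails there, and it is precisely at this step that the exponent $1+\delta$ in $(W_2)$ must be traded against the singularity of the Bergman-type kernel, explaining why $(W_2)$ (rather than mere monotonicity) is indispensable.
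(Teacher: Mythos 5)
The paper does not prove this lemma at all --- it is imported verbatim from Kellay--Lef\`evre \cite{kellay1}, where the argument is exactly the one you give: split the integral $I(a)$ at $|z|=|a|$, use $(W_1)$ on the outer piece and the monotonicity of $\omega(r)(1-r)^{-(1+\delta)}$ from $(W_2)$ on the inner piece, invoke the Forelli--Rudin estimates, and get the matching lower bound from a disk of radius comparable to $1-|a|$ about $a$ on which $\omega(z)\asymp\omega(a)$. Your proposal is correct and complete (including the separate treatment of $|a|\le\tfrac12$, where the derivative term degenerates and the constant term $|f_a(0)|^2$ carries the lower bound), so there is nothing to fix.
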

\section{\bf Essential Norm}
Recall that the essential norm $||T||_e$ of a bounded operator $T$ between Banach spaces $X$ and $Y$ is defined as
the distance from $T$ to $K(X,Y)$, the space of all compact operators between $X$ and $Y$.
\begin{thm} \label{g1}
Let $\omega$ be an admissible weight and $C_{\varphi}$ be a bounded operator on $\mathcal{H}_{\omega}$. Then
$$ || C_{\varphi} ||_e \asymp \limsup_{|z| \rightarrow 1^{-}} \frac{N_{\varphi, \omega} (z)}{\omega (z)}. $$
\end{thm}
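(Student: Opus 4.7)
Set $L := \limsup_{|z|\to 1^-} N_{\varphi,\omega}(z)/\omega(z)$. The plan is to prove the two comparable inequalities $\|C_\varphi\|_e^2 \gtrsim L$ and $\|C_\varphi\|_e^2 \lesssim L$ separately; the first is obtained by testing against the functions of Lemma \ref{l1}, while the second is obtained by approximating $C_\varphi$ by finite-rank operators built from Taylor truncation.

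For the lower bound I would pick a sequence $\{a_n\}\subset\mathbb{D}$ with $|a_n|\to 1^-$ along which $N_{\varphi,\omega}(a_n)/\omega(a_n)\to L$. Since $\|f_{a_n}\|_{\mathcal{H}_\omega}\asymp 1$ and $f_{a_n}\to 0$ locally uniformly on $\mathbb{D}$, a standard reproducing-kernel argument yields $f_{a_n}\rightharpoonup 0$ weakly, so that $\|Kf_{a_n}\|_{\mathcal{H}_\omega}\to 0$ for every compact operator $K$ on $\mathcal{H}_\omega$. Taking the infimum over compact $K$ gives
\[
\|C_\varphi\|_e^2 \gtrsim \limsup_n \|C_\varphi f_{a_n}\|_{\mathcal{H}_\omega}^2.
\]
The change of variable identity \eqref{r1} rewrites the right-hand side as $|f_{a_n}(\varphi(0))|^2 + \int_{\mathbb{D}} |f_{a_n}'(z)|^2 N_{\varphi,\omega}(z)\,dA(z)$. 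The density $|f_{a_n}'|^2$ is a normalized bump essentially concentrated around $a_n$ with total mass proportional to $1/\omega(a_n)$; combined with the sub-mean value inequality \eqref{r2} and the (L1)-type comparison $\omega(z)\asymp\omega(a_n)$ on a small disk around $a_n$, this extracts the pointwise value and produces $\|C_\varphi f_{a_n}\|_{\mathcal{H}_\omega}^2 \gtrsim N_{\varphi,\omega}(a_n)/\omega(a_n)$.

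For the upper bound I would use the compact finite-rank operators $C_\varphi P_N$, where $P_N$ denotes truncation to polynomials of degree at most $N$. For $f$ in the unit ball of $\mathcal{H}_\omega$, set $g_N := (I-P_N)f$; then $g_N(0)=0$ and $\|g_N\|_{\mathcal{H}_\omega}\leq 1$, and a second application of \eqref{r1} gives
\[
\|C_\varphi g_N\|_{\mathcal{H}_\omega}^2 = |g_N(\varphi(0))|^2 + \int_{\mathbb{D}} |g_N'(z)|^2 N_{\varphi,\omega}(z)\,dA(z).
\]
Fix $\varepsilon>0$ and choose $s\in(0,1)$ so that $N_{\varphi,\omega}(z)\leq (L+\varepsilon)\omega(z)$ for $s<|z|<1$. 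On the outer annulus the integral is bounded by $(L+\varepsilon)\|g_N'\|_\omega^2 \leq L+\varepsilon$, while the inner integral $\int_{|z|\leq s}|g_N'|^2 N_{\varphi,\omega}\,dA$ and the boundary term $|g_N(\varphi(0))|^2$ vanish uniformly in $f$ as $N\to\infty$: from $(W_2)$ one has $\omega(r)\geq \omega(0)(1-r)^{1+\delta}$, which forces $\omega_n \gtrsim n^{-\delta}$, and Cauchy--Schwarz against this bound controls the Taylor tails of $g_N$ and $g_N'$ uniformly on each compact subset of $\mathbb{D}$. Sending first $N\to\infty$ and then $\varepsilon\downarrow 0$ completes the upper estimate.

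The main obstacle is the localization step in the lower bound: converting $\int|f_{a_n}'|^2 N_{\varphi,\omega}\,dA$ into the pointwise ratio $N_{\varphi,\omega}(a_n)/\omega(a_n)$ with constants independent of $a_n$ requires matching the concentration scale of $|f_{a_n}'|^2$ with the Euclidean disks in \eqref{r2} and using Lemma \ref{l2} to absorb the $\varphi(0)$-shift. The upper bound is more routine, but the uniform vanishing of the inner contributions likewise depends on the admissibility hypotheses on $\omega$.
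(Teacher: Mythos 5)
Your proposal is correct, and for the lower bound it follows essentially the paper's own route: test against the normalized functions $f_a$ of Lemma \ref{l1}, use that they are norm-bounded and tend to zero locally uniformly (hence weakly), so that compact operators annihilate them in the limit; then rewrite $\|C_\varphi f_a\|_{\mathcal{H}_\omega}^2$ via the change-of-variables identity \eqref{r1}, restrict the integral to $D(a,\tfrac{1-|a|}{2})$ where $|1-\overline{a}z|\asymp 1-|a|$, and apply the sub-mean value property \eqref{r2} to recover the pointwise ratio $N_{\varphi,\omega}(a)/\omega(a)$. One small remark on that step: the comparison $\omega(z)\asymp\omega(a)$ on the localizing disk (which you phrase as an ``(L1)-type'' estimate, a hypothesis not assumed in this theorem) is not actually needed, because the factor $1/\omega(a)$ is already built into the normalization of $f_a$; what you do need, and correctly identify, is only the matching of scales between $|f_a'|^2$ and the disks in \eqref{r2}. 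Where you genuinely add content is the upper bound: the paper dismisses it with a citation to p.~136 of \cite{cown}, whereas you supply the standard self-contained argument via the finite-rank truncations $C_\varphi P_N$, splitting $\int |g_N'|^2 N_{\varphi,\omega}\,dA$ into an outer annulus controlled by $L+\varepsilon$ and an inner disk whose contribution vanishes uniformly in $f$ because $(W_2)$ forces $\omega_n\gtrsim n^{-\delta}$ and hence uniform control of Taylor tails on compacta (one also uses $\int_{\mathbb{D}}N_{\varphi,\omega}\,dA<\infty$, which follows from boundedness of $C_\varphi$ applied to $f(z)=z$). The only cosmetic mismatch is that your estimates, like the paper's own displayed computation, really establish $\|C_\varphi\|_e^2\asymp \limsup_{|z|\to 1^-}N_{\varphi,\omega}(z)/\omega(z)$ rather than the unsquared form in which the theorem is literally stated.
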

\begin{proof}
Consider the test function defined in Lemma \ref{l1}. Then $\{ f_{a} \}_{a \in \mathbb{D}}$ is bounded in $\mathcal{H}_{\omega}$ and
converges uniformly on compact subsets of $\mathbb{D}$ to 0 as $|a| \rightarrow 1^-$. Then for every compact operator $K$ on $\mathcal{H}_{\omega}$,
$\lim_{|a| \rightarrow 1^-} ||K f_{a}||_{\mathcal{H}_{\omega}} =0$. There exists a constant $c >0$ such that
\begin{align*}
||C_{\varphi} - K|| \geq & c  \limsup_{|a| \rightarrow 1^{-}} ||C_{\varphi} f_{a} - K f_{a}||_{\mathcal{H}_{\omega}} \\
\geq &  c \limsup_{|a| \rightarrow 1^{-}} ||C_{\varphi} f_{a}||_{\mathcal{H}_{\omega}}.
\end{align*}
Therefore
$$ ||C_{\varphi}||_e \geq c \limsup_{|a| \rightarrow 1^{-}} ||C_{\varphi} f_{a}||_{\mathcal{H}_{\omega}}.$$
On the other hand
\begin{align*}
||C_{\varphi} f_{a}||_{\mathcal{H}_{\omega}}^2 = & |f_{a} (\varphi(0))|^2 + \int_{\mathbb{D}} |f_a' (\varphi(z))|^2 |\varphi'(z)|^2 \omega(z) dA(z) \\
=& |f_{a} (\varphi(0))|^2 + \int_{\mathbb{D}}  |f_a'(z)|^2 N_{\varphi, \omega} (z) dA(z) \\
\geq & c \frac{(1-|a|^2)^{2+2\delta} |a|^2}{\omega (a)} \int_{D(a, \frac{1-|a|}{2})} \frac{N_{\varphi, \omega} (z)}{|1-\overline{a}z|^{4+2\delta}} dA(z).
\end{align*}
If $|a|$ is close enough to 1, then $\varphi(0) \not \in D(a, \frac{1-|a|}{2})$. So $|1-\overline{a}z| \asymp (1-|a|)$ for $z \in D(a, \frac{1-|a|}{2})$.
We have
$$ \limsup_{|a| \rightarrow 1^{-}} ||C_{\varphi} f_{a}||_{\mathcal{H}_{\omega}}^2 \geq c \limsup_{|a| \rightarrow 1^{-}}
\frac{|a|^2}{\omega (a) (1-|a|)^2}  \int_{D(a, \frac{1-|a|}{2})} N_{\varphi, \omega} (z) dA(z).$$
By the sub-mean value property of $N_{\varphi, \omega}$, we get
$$\limsup_{|a| \rightarrow 1^{-}} ||C_{\varphi} f_{a}||_{\mathcal{H}_{\omega}}^2 \geq c \limsup_{|a| \rightarrow 1^{-}}
\frac{N_{\varphi, \omega} (a)}{\omega (a)}. $$
Now
$$ ||C_{\varphi}||_e^2  \geq c \limsup_{|a| \rightarrow 1^{-}} \frac{N_{\varphi, \omega} (a)}{\omega (a)},$$
and the lower estimate is obtained. The upper estimate comes from p. 136 \cite{cown}.
\end{proof}
Since the space of compact operators is a closed subspace of space of bounded operators, then a bounded operator $T$ is compact
if and only if $||T||_e =0$. According to this fact we have the following corollary which is  Theorem 1.4 \cite{kellay1}.
\begin{cor} \label{c1}
Let $\omega$ be an admissible weight. Then $C_{\varphi}$ is compact on $\mathcal{H}_{\omega}$ if and only if
$$ \lim_{|z| \rightarrow 1^{-}} \frac{N_{\varphi, \omega} (z)}{\omega (z)} =0.$$
\end{cor}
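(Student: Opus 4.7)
The plan is to derive this corollary directly from Theorem \ref{g1} by invoking the standard functional-analytic fact that a bounded linear operator $T$ on a Banach space is compact if and only if its essential norm $\|T\|_{e}$ vanishes (since $K(X,Y)$ is closed inside the bounded operators under the operator norm).

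First I would observe that if $C_{\varphi}$ is compact on $\mathcal{H}_{\omega}$, then in particular it is bounded, so Theorem \ref{g1} applies and gives
\[
0 \;=\; \|C_{\varphi}\|_{e} \;\asymp\; \limsup_{|z|\to 1^{-}} \frac{N_{\varphi,\omega}(z)}{\omega(z)},
\]
forcing $\lim_{|z|\to 1^{-}} N_{\varphi,\omega}(z)/\omega(z)=0$. Conversely, assuming the limit is zero, the quantity $N_{\varphi,\omega}(z)/\omega(z)$ is in particular bounded on $\mathbb{D}$, which by the boundedness criterion of Kellay--Lef\`evre ensures $C_{\varphi}$ is bounded on $\mathcal{H}_{\omega}$. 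Then Theorem \ref{g1} gives $\|C_{\varphi}\|_{e}=0$, and hence $C_{\varphi}$ is compact.

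The only mildly delicate point is making sure the hypothesis ``$C_{\varphi}$ is bounded'' in Theorem \ref{g1} is in fact available in both directions: compactness gives it for free in one direction, while in the other direction one needs the vanishing of the limit (together with admissibility of $\omega$) to guarantee boundedness before applying the essential norm formula. Apart from this, no additional work is required, and the corollary is immediate.
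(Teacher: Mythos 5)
Your proof is correct and follows essentially the same route as the paper: the corollary is deduced from Theorem \ref{g1} together with the standard fact that a bounded operator is compact precisely when its essential norm vanishes, because the compact operators form a closed subspace of the bounded ones. Your extra care in securing the boundedness hypothesis for the converse direction (via the Kellay--Lef\`{e}vre boundedness criterion, since a vanishing limit forces the ratio $N_{\varphi,\omega}(z)/\omega(z)$ to be bounded near the boundary) is a point the paper passes over silently, but it does not change the substance of the argument.
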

\section{\bf Hilbert-Schmidt and Schatten-class}
In this section we try to get a characterization of Hilbert-Schmidt composition operators. Another characterization
we will have as a result of Schatten-class in the case $p=2$.
\begin{thm} \label{g2}
Let $\omega$ be a weight. Then $C_{\varphi}: \mathcal{H}_{\omega} \rightarrow \mathcal{H}_{\omega}$ is Hilbert-Schmidt if and only if
$$ \int_{\mathbb{D}} ||R_z||^2 N_{\varphi, \omega} (z) dA(z) < \infty, $$
where $R_z$ is the reproducing kernel of the weighted Bergman space $\mathcal{A}_{\omega}^2$.
\end{thm}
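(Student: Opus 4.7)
The plan is to compute the Hilbert--Schmidt norm of $C_\varphi$ directly via an orthonormal basis, apply the change of variables \eqref{r1}, and then recognize that the resulting kernel series is exactly the diagonal of the Bergman reproducing kernel.

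First I would use the obvious orthonormal basis $\{e_n\}_{n\ge 0}$ of $\mathcal{H}_\omega$ given by $e_0(z)=1$ and $e_n(z)=z^n/\sqrt{\omega_n}$ for $n\ge 1$. For this basis,
\[
\|C_\varphi e_n\|_{\mathcal{H}_\omega}^2 = \frac{|\varphi(0)|^{2n}}{\omega_n} + \frac{n^2}{\omega_n}\int_{\mathbb{D}} |\varphi(z)|^{2n-2}|\varphi'(z)|^2\,\omega(z)\,dA(z)\qquad(n\ge 1),
\]
and $\|C_\varphi e_0\|^2=1$. Applying \eqref{r1} with $f(w)=|w|^{2n-2}$ converts the integrals to
\[
\int_{\mathbb{D}} |z|^{2n-2}\,N_{\varphi,\omega}(z)\,dA(z),
\]
so that summing over $n$ and swapping the order of summation and integration (permitted by Tonelli, as everything is non-negative) gives
\[
\|C_\varphi\|_{HS}^2 \;=\; \sum_{n=0}^{\infty}\frac{|\varphi(0)|^{2n}}{\omega_n} \;+\; \int_{\mathbb{D}} \Bigl(\sum_{n=1}^{\infty}\frac{n^2}{\omega_n}|z|^{2n-2}\Bigr) N_{\varphi,\omega}(z)\,dA(z).
\]

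Next I would identify the inner sum with $\|R_z\|^2$. From the definitions, $\omega_n = 2n^2\int_0^1 r^{2n-1}\omega(r)\,dr = n^2 p_{n-1}$, so $n^2/\omega_n = 1/p_{n-1}$. Since $\{z^m/\sqrt{p_m}\}_{m\ge 0}$ is an orthonormal basis of $\mathcal{A}_\omega^2$, the reproducing kernel is $R_z(w)=\sum_{m\ge 0}\overline{z}^{\,m}w^m/p_m$, and hence $\|R_z\|^2=R_z(z)=\sum_{m\ge 0}|z|^{2m}/p_m$. After re-indexing $m=n-1$, the inner sum in the display above is exactly $\|R_z\|^2$.

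It remains only to observe that the ``boundary term'' $\sum_{n\ge 0}|\varphi(0)|^{2n}/\omega_n$ is a finite constant: it is the square norm of the evaluation functional at $\varphi(0)$ on $\mathcal{H}_\omega$, which is bounded since $\mathcal{H}_\omega$ is a reproducing kernel Hilbert space of analytic functions on $\mathbb{D}$ (one quick bound: apply Lemma \ref{l1} to produce a test function of norm $\asymp 1$ with controlled value at $\varphi(0)$, or just note that $|\varphi(0)|<1$ and that the $\omega_n$ have at worst polynomial decay under $(W_1)$--$(W_3)$). Consequently the Hilbert--Schmidt condition is equivalent to finiteness of $\int_{\mathbb{D}}\|R_z\|^2 N_{\varphi,\omega}(z)\,dA(z)$, as claimed.

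The only slightly delicate point is the Fubini/Tonelli exchange, but since all integrands and summands are non-negative this is automatic; the computational heart of the proof is the clean identity $n^2/\omega_n=1/p_{n-1}$ that links the $\mathcal{H}_\omega$-basis to the $\mathcal{A}_\omega^2$-reproducing kernel.
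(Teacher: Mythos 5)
Your proof is correct and follows essentially the same route as the paper: expand the Hilbert--Schmidt norm over the monomial orthonormal basis, apply the change-of-variables formula \eqref{r1}, and use the identity $n^2/\omega_n = 1/p_{n-1}$ to recognize the inner series as the diagonal $\|R_z\|^2$ of the Bergman reproducing kernel. You are in fact slightly more careful than the paper, which silently drops the point-evaluation terms $|\varphi(0)|^{2n}/\omega_n$ (and has an off-by-one index in the kernel series) that you correctly account for and show sum to a finite constant.
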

\begin{proof}
Note that $\{ z^n / ||z^n||_{\mathcal{H}_{\omega}}\}$ is an orthonormal basis for $\mathcal{H}_{\omega}$. This implies that
\begin{align*}
\sum_{n=1}^{\infty} ||C_{\varphi} (\frac{z^n}{||z^n||_{\mathcal{H}_{\omega}}})||_{\mathcal{H}_{\omega}}^2 = &
\sum_{n=1}^{\infty} \int_{\mathbb{D}} \frac{n^2 |\varphi(z)|^{2(n-1)} }{||z^n||_{\mathcal{H}_{\omega}}^2} |\varphi'(z)|^2 \omega(z) dA(z) \\
=&  \sum_{n=1}^{\infty} \int_{\mathbb{D}} \frac{n^2 |z|^{2(n-1)}}{||z^n||_{\mathcal{H}_{\omega}}^2} N_{\varphi, \omega} (z) dA(z) \\
= & \int_{\mathbb{D}} \sum_{n=1}^{\infty}  \frac{n^2 |z|^{2(n-1)}}{\omega_n} N_{\varphi, \omega} (z) dA(z) \\
= &  \int_{\mathbb{D}} \sum_{n=1}^{\infty}  \frac{|z|^{2n}}{p_n} N_{\varphi, \omega} (z) dA(z) \\
=&  \int_{\mathbb{D}} ||R_z||^2 N_{\varphi, \omega} (z) dA(z).
\end{align*}
This completes the proof.
\end{proof}
For studying Schatten-class we need the Toeplitz operator. For more information about relation between Toeplitz operator and
Schatten-class see \cite{zhu1}. Let $\psi$ be positive function in $L^1 (\mathbb{D}, dA)$ and $\omega$ be a weight. The Toeplitz
operator associated to $\psi$ defined by
$$ T_{\psi} f(z) = \frac{1}{\omega(z)} \int_{\mathbb{D}} \frac{f(t) \psi(t) \omega(t)}{(1-\overline{z}t)^2} dA(t). $$
$T_{\psi} \in S_p (\mathcal{A}_{\omega}^2)$ if and only if the function
$$ \widehat{\psi}_r (z) = \frac{1}{(1-|z|^2)^2 \omega(z)} \int_{\Delta(z,r)} \psi(t) \omega(t) dA(t) $$
is in $L^p(\mathbb{D}, d \lambda)$, \cite{lues}, where $d \lambda = (1-|z|^2)^{-2} dA(z)$ is the hyperbolic measure on $\mathbb{D}$.
 According to the description of \cite{pau} pages 8 and 9,
$C_{\varphi} \in S_p (\mathcal{H}_{\omega})$ if and only if $\varphi'C_{\varphi} \in S_p (\mathcal{A}_{\omega}^2)$.
\begin{thm} \label{g3}
Let $\omega$ be an admissible  weight satisfy (L1) condition. Then  $C_{\varphi} \in S_p (\mathcal{H}_{\omega})$ if and only if
$$\psi (z) = \frac{N_{\varphi, \omega} (z)}{\omega(z)} \in L^{p/2} (\mathbb{D}, d \lambda).$$
\end{thm}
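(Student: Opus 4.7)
The plan is to leverage the reduction already flagged before the theorem, namely that $C_{\varphi}\in S_p(\mathcal{H}_{\omega})$ if and only if $\varphi'C_{\varphi}\in S_p(\mathcal{A}_{\omega}^2)$. Since membership in $S_p$ for an operator $A$ is the same as membership in $S_{p/2}$ for $A^*A$, I would first identify the positive operator $(\varphi'C_{\varphi})^*(\varphi'C_{\varphi})$ on $\mathcal{A}_{\omega}^2$ with a natural Toeplitz operator. A direct computation using the change of variables formula \eqref{r1} gives, for $f\in\mathcal{A}_{\omega}^2$,
\begin{align*}
\langle (\varphi'C_{\varphi})^*(\varphi'C_{\varphi}) f,f\rangle_{\mathcal{A}_{\omega}^2}
&=\int_{\mathbb{D}}|f(\varphi(z))|^2|\varphi'(z)|^2\omega(z)\,dA(z)\\
&=\int_{\mathbb{D}}|f(z)|^2\,N_{\varphi,\omega}(z)\,dA(z)
=\int_{\mathbb{D}}|f(z)|^2\psi(z)\omega(z)\,dA(z),
\end{align*}
which is exactly $\langle T_{\psi}f,f\rangle_{\mathcal{A}_{\omega}^2}$. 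By polarization, $(\varphi'C_{\varphi})^*(\varphi'C_{\varphi})=T_{\psi}$ on $\mathcal{A}_{\omega}^2$. Therefore $C_{\varphi}\in S_p(\mathcal{H}_{\omega})$ iff $T_{\psi}\in S_{p/2}(\mathcal{A}_{\omega}^2)$.

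Next I would invoke Luecking's Toeplitz-Schatten criterion recalled just above the theorem: $T_{\psi}\in S_{p/2}(\mathcal{A}_{\omega}^2)$ iff $\widehat{\psi}_r\in L^{p/2}(\mathbb{D},d\lambda)$. What remains is to prove that, for an admissible weight satisfying (L1), the condition $\widehat{\psi}_r\in L^{p/2}(d\lambda)$ is equivalent to $\psi\in L^{p/2}(d\lambda)$. The (L1) assumption gives $\omega(t)\asymp\omega(z)$ for $t\in\Delta(z,r)$, so, since $|\Delta(z,r)|\asymp(1-|z|^2)^2$,
\begin{equation*}
\widehat{\psi}_r(z)\asymp\frac{1}{|\Delta(z,r)|}\int_{\Delta(z,r)}\psi(t)\,dA(t).
\end{equation*}

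For the direction $\widehat{\psi}_r\in L^{p/2}(d\lambda)\Rightarrow\psi\in L^{p/2}(d\lambda)$ I would combine this with the sub-mean value property \eqref{r2} of $N_{\varphi,\omega}$: choosing a Euclidean disk $D(z,\varepsilon(1-|z|))$ contained in $\Delta(z,r)$ (for $|z|$ sufficiently close to $1$, so that the exclusion of $D(0,1/2)$ in \eqref{r2} is harmless), one obtains
\begin{equation*}
N_{\varphi,\omega}(z)\le\frac{c}{(1-|z|)^2}\int_{\Delta(z,r)}N_{\varphi,\omega}(t)\,dA(t),
\end{equation*}
and dividing by $\omega(z)\asymp\omega(t)$ yields the pointwise domination $\psi(z)\le c\,\widehat{\psi}_r(z)$, which is more than enough for the $L^{p/2}(d\lambda)$ conclusion (for small $|z|$ both functions are bounded so nothing is lost). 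The reverse direction is a standard averaging argument: writing $\widehat{\psi}_r$ as an integral operator applied to $\psi$, the overlap of the hyperbolic disks $\Delta(z,r)$ is uniformly bounded and the kernel has the symmetry property $t\in\Delta(z,r)\Leftrightarrow z\in\Delta(t,r')$ up to a fixed inflation of the radius, so one gets $\|\widehat{\psi}_r\|_{L^{p/2}(d\lambda)}\le c\|\psi\|_{L^{p/2}(d\lambda)}$ by Hölder/Jensen when $p/2\ge 1$ and by the well-known Luecking-type atomic argument when $p/2<1$.

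The main obstacle I anticipate is the pointwise estimate $\psi(z)\le c\,\widehat{\psi}_r(z)$: it is precisely here that both \emph{admissibility} (through the sub-mean value property \eqref{r2}) and the \emph{(L1) condition} (through $\omega(t)\asymp\omega(z)$ on $\Delta(z,r)$) are simultaneously needed, and one has to choose the Euclidean radius carefully to land inside the pseudo-hyperbolic disk while staying away from $D(0,1/2)$. Everything else — the Toeplitz identification, Luecking's theorem, and the reverse averaging inequality — is either a short computation or a citation.
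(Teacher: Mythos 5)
Your proposal follows essentially the same route as the paper: the identification $(\varphi'C_{\varphi})^*(\varphi'C_{\varphi})=T_{\psi}$ on $\mathcal{A}_{\omega}^2$, the passage from $S_p$ to $S_{p/2}$ via Zhu's theorem, Luecking's criterion for $T_{\psi}$, and the pointwise bound $\psi\le c\,\widehat{\psi}_r$ from the sub-mean value property for one direction of the final equivalence. The only divergence is in the reverse averaging inequality, where the paper bounds $\widehat{\psi}_r(z)^{p/2}$ by a supremum of $N_{\varphi,\omega}^{p/2}$ over $\Delta(z,r)$, applies the sub-mean value property to that power, and finishes with the Forelli--Rudin kernel estimate and Fubini, whereas you defer to the standard bounded-overlap/atomic argument --- both are routine and the latter likewise needs the sub-mean value property for $N_{\varphi,\omega}^{p/2}$ when $p<2$, so the substance is the same.
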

\begin{proof}
For any $f,g \in \mathcal{H}_{\omega} $ we have
\begin{align*}
\langle (\varphi' C_{\varphi})^*(\varphi' C_{\varphi}) f, g \rangle = &
 \langle (\varphi' C_{\varphi}) f, (\varphi' C_{\varphi})g \rangle \\
 =& \int_{\mathbb{D}} f(\varphi(z)) \overline{g(\varphi(z))} |\varphi'(z)|^2 \omega(z) dA(z)\\
 =&  \int_{\mathbb{D}} f(z) \overline{g(z)} N_{\varphi, \omega} (z) dA(z).
\end{align*}
On the other hand, since $\frac{1}{(1-\overline{z}t)^2}$ is the reproducing kernel in $\mathcal{A}^2$,
$$ T_{\psi} f(z) = \frac{1}{\omega(z)} \int_{\mathbb{D}} \frac{f(t) N_{\varphi, \omega} (t)}{(1-\overline{z}t)^2} dA(t)
= \frac{N_{\varphi, \omega} (z) f(z)}{\omega(z)}. $$
Therefore
$$ \langle T_{\psi} f, g \rangle = \int_{\mathbb{D}} f(z) \overline{g(z)} N_{\varphi, \omega} (z) dA(z). $$
Thus $T_{\psi} = (\varphi' C_{\varphi})^*(\varphi' C_{\varphi})$. Theorem 1.4.6 \cite{zhu3} implies that
$\varphi'C_{\varphi} \in S_p (\mathcal{A}_{\omega}^2)$ if and only if $(\varphi' C_{\varphi})^*(\varphi' C_{\varphi}) \in  S_{p/2} (\mathcal{A}_{\omega}^2)$.
We get $\varphi'C_{\varphi} \in S_p (\mathcal{A}_{\omega}^2)$ if and only if $T_{\psi} \in S_{p/2} (\mathcal{A}_{\omega}^2)$ if and only if
$ \widehat{\psi}_r (z) \in L^{p/2}(\mathbb{D}, d \lambda)$. \\
It is clear that $\Delta(z,r)$ contains an Euclidian disk centered at $z$ of radius $\eta (1-|z|)$ with $\eta$ depending only on $r$. By the sub-mean
value property of $N_{\varphi, \omega}$ we have
\begin{align*}
\psi (z) = \frac{N_{\varphi, \omega} (z)}{\omega(z)} \leq & \frac{2}{r^2 \omega(z)} \int_{\Delta(z,r)} N_{\varphi, \omega} (t) dA(t) \\
\leq &  \frac{2}{r^2 (1-|z|^2)^2 \omega(z)} \int_{\Delta(z,r)} \psi(t) \omega (t) dA(t) \\
= & \frac{2}{r^2} \widehat{\psi}_r (z).
\end{align*}
So $ \widehat{\psi}_r (z) \in L^{p/2}(\mathbb{D}, d \lambda)$ implies $ \psi(z) \in L^{p/2}(\mathbb{D}, d \lambda)$. Now, suppose that
$ \psi(z) \in L^{p/2}(\mathbb{D}, d \lambda)$. From the argument above, noting that $(1-|t|^2) \asymp (1-|z|^2) \asymp |1-\overline{t}z| $ and
$\frac{\omega(t)}{\omega(z)} \leq c$, for $t \in \Delta(z,r)$, we have
\begin{align*}
\widehat{\psi}_r (z)^{p/2} \leq & \frac{c}{\omega(z)^{p/2}} \sup \{\psi(t)^{p/2} \omega(t)^{p/2}: t \in \Delta(z,r) \} \\
\leq & \frac{c}{\omega(z)^{p/2}} \sup_{t \in \Delta(z,r)}  \int_{\Delta(t,r)} \psi(s)^{p/2} \omega(s)^{p/2} dA(s) \\
\leq & \frac{c}{\omega(z)^{p/2}} \sup_{t \in \Delta(z,r)}  \int_{\Delta(t,r)} \frac{(1-|z|^2)^2}{|1-\overline{s} z|^4}
\psi(s)^{p/2} \omega(s)^{p/2} dA(s).
\end{align*}
Since $t \in \Delta(z,r)$, we can choose $\Delta(t,r)$ so that $\Delta(t,r) \subset \Delta(z,r)$. Then
\begin{align*}
 \widehat{\psi}_r (z)^{p/2} \leq & \frac{c}{\omega(z)^{p/2}} \int_{\Delta(z,r)} \frac{(1-|z|^2)^2}{|1-\overline{s} z|^4}
\psi(s)^{p/2} \omega(s)^{p/2} dA(s) \\
\leq & c \int_{\mathbb{D}} \frac{(1-|z|^2)^2}{|1-\overline{s} z|^4}
\psi(s)^{p/2} dA(s).
\end{align*}
By Fubini's Theorem and well known theorem (Theorem 1.12 \cite{zhu2}), we get
$$\int_{\mathbb{D}} \widehat{\psi}_r (z)^{p/2}  d\lambda (z) \leq c \int_{\mathbb{D}} \psi (s)^{p/2}  d\lambda (s).$$
\end{proof}
If $p=2$, then we have another characterization for Hilbert-Schmidt composition operators.
\begin{cor} \label{c2}
Let $\omega$ be an admissible  weight satisfy (L1) condition. Then  $C_{\varphi} $ is Hilbert-Schmidt on $\mathcal{H}_{\omega}$
if and only if
$$ \int_{\mathbb{D}} \frac{N_{\varphi, \omega} (z)}{\omega(z) (1-|z|^2)^2} dA(z) =
\int_{\mathbb{D}} \frac{N_{\varphi, \omega} (z)}{\omega(z)} d\lambda(z) < \infty. $$
\end{cor}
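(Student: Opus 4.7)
The plan is to derive this corollary as an immediate special case of Theorem~\ref{g3}. Recall that an operator is Hilbert-Schmidt precisely when it belongs to the Schatten class $S_2$, so I take $p=2$ in Theorem~\ref{g3}. This tells me that $C_\varphi$ is Hilbert-Schmidt on $\mathcal{H}_\omega$ if and only if
$$\psi(z) = \frac{N_{\varphi,\omega}(z)}{\omega(z)} \in L^{1}(\mathbb{D}, d\lambda),$$
since $p/2 = 1$ in this case.

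Next I would simply unpack the definition $d\lambda(z) = (1-|z|^2)^{-2} dA(z)$ given just before the statement of Theorem~\ref{g3}. Substituting this into the $L^1(d\lambda)$ integrability condition yields
$$\int_{\mathbb{D}} \frac{N_{\varphi,\omega}(z)}{\omega(z)}\, d\lambda(z) = \int_{\mathbb{D}} \frac{N_{\varphi,\omega}(z)}{\omega(z)(1-|z|^2)^2}\, dA(z) < \infty,$$
which is exactly the condition stated in the corollary. The two expressions for the integral that appear in the corollary's statement are therefore tautologically equal, so no further calculation is required.

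Since everything follows by direct specialization, there is essentially no obstacle; the only thing to verify is the identification $S_2 = $ Hilbert-Schmidt class, which is a standard fact from operator theory that can be cited without further comment. Note that the hypotheses on $\omega$ (admissible and (L1)) are inherited directly from Theorem~\ref{g3}, so no additional hypothesis verification is needed. The proof is essentially a one-line invocation of Theorem~\ref{g3}, and could even be stated as a remark rather than a corollary.
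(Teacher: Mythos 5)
Your proposal is correct and matches the paper's intended derivation exactly: the paper offers no separate argument for Corollary~\ref{c2}, presenting it precisely as the $p=2$ specialization of Theorem~\ref{g3} combined with the standard identification of the Hilbert--Schmidt class with $S_2$ and the definition $d\lambda(z)=(1-|z|^2)^{-2}\,dA(z)$. Nothing further is needed.
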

\section{ \bf Closed Range}
It is well known that having the closed range for a bounded operator acting on a Hilbert space $H$ is equivalent to existing a positive constant
$c$ such that for every $f \in H$, $||Tf||_H \geq c ||f||_H$. Consider the function
$$ \tau_{\varphi, \omega} (z) =  \frac{N_{\varphi, \omega} (z)}{\omega(z)}. $$
\begin{prop} \label{g5}
Let $\omega$ be an admissible  weight  and  $C_{\varphi}$ be a bounded operator on $\mathcal{H}_{\omega}$. Then
$C_{\varphi}$ has closed range if and only if there exists a constant $c > 0$ such that for all $f \in \mathcal{H}_{\omega}$
\begin{equation} \label{r3}
\int_{\mathbb{D}} |f'(z)|^2  \tau_{\varphi, \omega} (z) \omega (z) \ dA(z)  \geq c
 \int_{\mathbb{D}} |f'(z)|^2  \omega (z) \ dA(z).
\end{equation}
\end{prop}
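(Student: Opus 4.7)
The plan is to recast (\ref{r3}) as a seminorm version of the boundedness-from-below inequality that characterizes closed range, and to pass between the two using only a shift by an appropriate constant (tacitly assuming $\varphi$ is non-constant; otherwise the statement degenerates). First I would apply the change-of-variables identity (\ref{r1}) with $|h'|^2$ in place of $f$ to obtain
\[
\int_{\mathbb{D}} |h'(z)|^2 \tau_{\varphi,\omega}(z)\omega(z)\, dA(z) = \int_{\mathbb{D}} |h'(\varphi(z))|^2|\varphi'(z)|^2 \omega(z)\, dA(z) = \|(C_\varphi h)'\|_\omega^2.
\]
Thus (\ref{r3}) is exactly the seminorm estimate $\|(C_\varphi f)'\|_\omega^2 \geq c\|f'\|_\omega^2$ for every $f\in\mathcal{H}_\omega$. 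Since $C_\varphi$ is injective (by the identity theorem, as $\varphi$ is non-constant), having closed range on the Hilbert space $\mathcal{H}_\omega$ is equivalent to the existence of $c>0$ with $\|C_\varphi f\|_{\mathcal{H}_\omega} \geq c\|f\|_{\mathcal{H}_\omega}$ for every $f$.

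For the forward implication I would feed the bounded-below inequality the shifted function $\tilde f := f - f(\varphi(0))$. The point of the shift is that $(C_\varphi \tilde f)(0) = \tilde f(\varphi(0)) = 0$, so
\[
\|C_\varphi \tilde f\|_{\mathcal{H}_\omega}^2 = |(C_\varphi \tilde f)(0)|^2 + \|(C_\varphi \tilde f)'\|_\omega^2 = \|(C_\varphi f)'\|_\omega^2,
\]
while $\|\tilde f\|_{\mathcal{H}_\omega}^2 = |\tilde f(0)|^2 + \|f'\|_\omega^2 \geq \|f'\|_\omega^2$. The inequality $\|C_\varphi \tilde f\|_{\mathcal{H}_\omega} \geq c\|\tilde f\|_{\mathcal{H}_\omega}$ then reads $\|(C_\varphi f)'\|_\omega^2 \geq c^2 \|f'\|_\omega^2$, which is (\ref{r3}).

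For the reverse implication I would start from
\[
\|C_\varphi f\|_{\mathcal{H}_\omega}^2 = |f(\varphi(0))|^2 + \|(C_\varphi f)'\|_\omega^2 \geq |f(\varphi(0))|^2 + c\|f'\|_\omega^2
\]
and aim to lower-bound this by a multiple of $|f(0)|^2 + \|f'\|_\omega^2 = \|f\|_{\mathcal{H}_\omega}^2$. The tool is boundedness of point evaluation on the reproducing kernel Hilbert space $\mathcal{H}_\omega$: applied to $f - f(0)$ (whose $\mathcal{H}_\omega$-norm equals $\|f'\|_\omega$) at $\varphi(0)$, it yields $|f(\varphi(0)) - f(0)| \leq M\|f'\|_\omega$ for some $M = M(\varphi(0))$, and consequently $|f(0)|^2 \leq 2|f(\varphi(0))|^2 + 2M^2\|f'\|_\omega^2$. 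Taking $c' := \min\{\tfrac12, c/(2M^2+1)\}$ gives $\|C_\varphi f\|_{\mathcal{H}_\omega}^2 \geq c'\|f\|_{\mathcal{H}_\omega}^2$, so $C_\varphi$ is bounded below and its range is closed.

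There is no deep obstacle; the entire point is careful bookkeeping with the constants. Shifting by $f(\varphi(0))$ rather than by $f(0)$ in the forward direction kills the 0th-order term of $C_\varphi\tilde f$ cleanly, and in the reverse direction the $|f(0)|^2$ contribution must be absorbed into the $|f(\varphi(0))|^2$ term using the reproducing kernel of $\mathcal{H}_\omega$ at $\varphi(0)$.
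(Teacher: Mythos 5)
Your proof is correct, and for the case $\varphi(0)\neq 0$ it takes a genuinely different route from the paper's. Both arguments begin the same way: the change-of-variables identity \eqref{r1} turns the left side of \eqref{r3} into $\|(C_{\varphi}f)'\|_{\omega}^{2}$, and closed range is equivalent to $C_{\varphi}$ being bounded below (using injectivity of $C_{\varphi}$ for non-constant $\varphi$, a hypothesis both you and the paper leave tacit; for constant $\varphi$ the statement is indeed degenerate). When $\varphi(0)=0$ the paper restricts to the subspace $\dot{\mathcal{H}_{\omega}}$ of functions vanishing at $0$, where the norm and the seminorm coincide, and the equivalence is immediate. When $\varphi(0)=a\neq 0$ the paper factors $C_{\varphi}=C_{\psi}C_{\sigma_{a}}$ with $\psi=\sigma_{a}\circ\varphi$, invokes invertibility of $C_{\sigma_{a}}$ on $\mathcal{H}_{\omega}$ to reduce to the case $\psi(0)=0$, and then uses Lemma \ref{l2} (comparability of $\omega\circ\sigma_{a}$ with $\omega$) to show that \eqref{r3} for $\varphi$ and for $\psi$ are equivalent. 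You instead handle general $\varphi(0)$ directly: in the forward direction you test the bounded-below inequality on $f-f(\varphi(0))$, which kills the zeroth-order term of $C_{\varphi}f$, and in the reverse direction you absorb $|f(0)|^{2}$ into $|f(\varphi(0))|^{2}+c\|f'\|_{\omega}^{2}$ via boundedness of point evaluation at $\varphi(0)$ on $\mathcal{H}_{\omega}$. Your approach buys independence from the Möbius machinery --- no need to justify that $C_{\sigma_{a}}$ is bounded and invertible on $\mathcal{H}_{\omega}$, nor to invoke Lemma \ref{l2} --- at the modest cost of one standard fact (that $\mathcal{H}_{\omega}$ is a functional Hilbert space, i.e.\ $\sum_{n}|z|^{2n}/\omega_{n}<\infty$ for $|z|<1$, which follows from $\omega$ being positive and integrable); it is also arguably cleaner, since the paper's $\varphi(0)=0$ case quietly assumes that closed range on $\mathcal{H}_{\omega}$ is equivalent to closed range on $\dot{\mathcal{H}_{\omega}}$, a point your argument never needs.
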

\begin{proof}
If $\varphi(0)=0$, the we can consider $C_{\varphi}$ acting on $\dot{\mathcal{H}_{\omega}}$, the closed subspace
of $\mathcal{H}_{\omega}$ consisting all functions with $f(0)=0$. Note that $C_{\varphi}$ has closed range
if and only if there exists a constant $c > 0$ such that  $||C_{\varphi}f||_{\mathcal{H}_{\omega}} \geq ||f||_{\mathcal{H}_{\omega}}$. But
\begin{align*}
||C_{\varphi}f||_{\mathcal{H}_{\omega}}^2 = & \int_{\mathbb{D}} |f'(\varphi(z))|^2 |\varphi'(z)|^2 \omega(z) \ dA(z) \\
= & \int_{\mathbb{D}} |f'(z)|^2 N_{\varphi, \omega} \ dA(z) \\
= & \int_{\mathbb{D}} |f'(z)|^2 \tau_{\varphi, \omega} (z) \omega(z) \ dA(z).
\end{align*}
Thus, in this case the proposition is proved. If $\varphi(0) = a \not = 0$, define the function $\psi = \sigma_a \circ \varphi$. Then
$C_{\varphi} = C_{\psi} C_{\sigma_a}$ and $C_{\sigma_a}$ is invertible on $\mathcal{H}_{\omega}$. Therefore $C_{\varphi}$ has closed range
if and only if $C_{\psi}$ has closed range. Since $\psi (0) = 0$, the argument above shows that  $C_{\psi}$ has closed range
if and only if there exists a constant $c > 0$ such that
\begin{equation} \label{r4}
\int_{\mathbb{D}} |f'(z)|^2  \tau_{\psi, \omega} (z) \omega (z) \ dA(z)  \geq c
 \int_{\mathbb{D}} |f'(z)|^2  \omega (z) \ dA(z).
\end{equation}
We just prove that (\ref{r3}) and (\ref{r4}) are equivalent. If (\ref{r3}) holds, then
\begin{align*}
\int_{\mathbb{D}} |f'(z)|^2  \tau_{\psi, \omega} (z) \omega (z) \ dA(z) =&  \int_{\mathbb{D}} |f'(z)|^2 N_{\psi, \omega}  \ dA(z) \\
=& \int_{\mathbb{D}} |(f \circ \psi)'(z)|^2 \omega (z) \ dA(z) \\
= & \int_{\mathbb{D}}  |(f \circ \sigma_a)'(\varphi(z))|^2 |\varphi'(z)|^2 \omega (z) \ dA(z)\\
= & \int_{\mathbb{D}}  |(f \circ \sigma_a)'(z)|^2 \tau_{\varphi, \omega} (z) \omega (z) \ dA(z) \\
\geq & c \int_{\mathbb{D}}  |(f \circ \sigma_a)'(z)|^2  \omega (z) \ dA(z) \\
= & c  \int_{\mathbb{D}}  |f'(z)|^2  \omega (\sigma_a(z)) \ dA(z) \\
\asymp & c \int_{\mathbb{D}}  |f'(z)|^2  \omega (z) \ dA(z).
\end{align*}
The last equation is due to Lemma \ref{l2}.
Hence (\ref{r4}) holds. Since $\varphi = \sigma_a \circ \psi$, the proof of converse part is similar.
\end{proof}
Fredholm composition operator is an example of composition operator with closed range property.
Recall that a bonded operator $T$ between two Banach spaces $X,Y$ is called Fredholm if Kernel $T$ and
$T^*$ are finite dimensional.
\begin{exm}
Suppose that $C_{\varphi}: \mathcal{H}_{\omega} \rightarrow \mathcal{H}_{\omega}$ be a Fredholm operator. By
Theorem 3.29\cite{cown}, $\varphi$ is an authomorphism of $\mathbb{D}$. Then $N_{\varphi, \omega} (z) = \omega (\varphi^{-1} (z))$.
If $\varphi(0)=0$, Schwarz Lemma implies that $|\varphi^{-1} (z)| \leq |z|$. Since $\omega$ is non-increasing,
$\omega (\varphi^{-1} (z)) = \omega (|\varphi^{-1} (z)|) \geq \omega (|z|) = \omega (z)$. Now (\ref{r3}) holds.
If $\varphi(0) \not = 0$, then the same argument can be applied.
\end{exm}

\subsection*{Acknowledgment}

\end{document}